\newtheorem{lemma}{Lemma}[section]
\newtheorem{theorem}[lemma]{Theorem}
\newtheorem{definition}[lemma]{Definition}
\newtheorem{example}[lemma]{Example}
\newtheorem{conjecture}[lemma]{Conjecture}
\newtheorem*{acknowledgement}{Acknowledgment}
\newcommand{\bn}{\textnormal{bn}}
\newcommand{\C}{\mathfrak{C}}
\newcommand{\D}{\mathfrak{D}}
\newcommand{\dn}{\textnormal{dn}}
\newcommand{\G}{\mathfrak{G}}
\newcommand{\Hh}{\mathfrak{H}}
\newcommand{\m}{\mathfrak{m}}
\newcommand{\R}{\mathfrak{R}}
\newcommand{\Ss}{\mathfrak{S}}
\newcommand{\Z}{\mathcal{Z}}
\begin{document}
\title{Decomposition Theory}
\author{Yi Zhang}
\dedicatory{Dedicated to my wife Huiqiong Deng}
\address{Department of Mathematics, Purdue University,
West Lafayette, IN 47907}
\email{zhan1291@purdue.edu}
\address{{\it https://sites.google.com/site/zhan1291/}}
\date{}

\begin{abstract}
We give a characterization of decomposition theory in linear algebra.
\end{abstract}
\maketitle

\section*{Introduction}

Since the introduction of abstract algebra, the study of decomposition of algebraic and geometric structures has been a central topic in mathematics. However, without the division operation, a general ring behaves far from a field, which makes decomposition theory fascinating yet intractable.

This paper introduces an elementary approach to this topic and initiates the study of decomposition number.

\section{Decomposition Number}

Throughout this paper, $R$ is a base ring and modules are left $R$-modules.

Let $M$ and $M_k$ $(k=1,\cdots,n)$ be $R$-modules. If there are $R$-morphisms $i_k:M_k\to M$ and $p_k:M\to M_k$ $(k=1,\cdots,n)$ such that 
\[p_ki_k=id_{M_k},\ \ p_ki_l=0 \ \ (k\neq l)\]
and 
\[\sum_k i_kp_k=id_M,\]
then $M_1\oplus \cdots\oplus M_n$ is a direct decomposition of $M$ in $R$-modules. If in this decomposition, no $M_k$ has nontrivial direct decomposition, then it is an indecomposable decomposition. 
If any two indecomposable decompositions of the module $M$ 
share the same indecomposable summands up to isomorphism and counting multiplicities, then we say the module $M$ 
satisfies the Krull-Schmidt condition.

\begin{definition}
If $M$ satisfies the Krull-Schmidt condition with the indecomposable decomposition $M=I_1\oplus \cdots\oplus I_n,$ then the decomposition number $\dn(M)$ is defined as $n.$ If $I$ is an indecomposable module and $M=I^{n_I}\oplus M'$ such that $I$ is not a direct summand of $M',$ then the decomposition number $\dn(M,I)$ relative to $I$ is defined as $n_I.$ 
\end{definition}

Let $M$ be a module over $R,$ and let $\G=\{g_i\}$ be a set of nonzero generators of $M.$ Define the associated free $R$-module $F$ as $\sum Re_i,$ where $\{e_i\}$ is a free basis. Define the relationship submodule of $F$ as $\{\sum r_ie_i\: | \: \sum r_ig_i=0\}.$ And define a relationship set $\R$ of $F$ as a set of generators of the relationship submodule. Each relationship set $\R$
defines an equivalence $\sim$ on the basis element $\{e_i\}$ as follows: 
\begin{itemize}
\item for each $i,$ $e_i\sim e_i,$ 
\item if $r_ie_i+r_je_j +\sum_{k\neq i,j}
r_ke_k\in \R$ where $r_i\neq 0$ and $r_j\neq 0,$ then $e_i\sim e_j,$
\item if $e_i\sim e_j$ and $e_j\sim e_k,$ then $e_i\sim e_k.$
\end{itemize}
This equivalence depends on the choice of the generators $\G$ as well as the relationship set $\R.$ Denote the number of equivalence classes in this equivalence by $n_{\G,\R}.$ Since a submodule of $M$ generated by all $g_i$'s whose corresponding $e_i$'s are in the same equivalence class is a direct summand of $M,$ the following criterion for indecomposability follows immediately.

\begin{theorem} \label{T:I}
A module $M$ is indecomposable if and only if there is only one equivalence class in $\{e_i\}$ for any choice of generators and relationship sets of $M.$ 
\end{theorem}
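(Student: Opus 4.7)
The plan is to prove the biconditional by establishing each direction contrapositively, leveraging the observation asserted in the paragraph preceding the theorem: whenever the basis $\{e_i\}$ splits into multiple equivalence classes, the submodules of $M$ generated by the generators in each class yield a direct decomposition.

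For the ``only if'' direction, I would assume that for some choice of generating set $\G=\{g_i\}$ and relationship set $\R$ the basis splits into equivalence classes $E_1,\ldots,E_m$ with $m\geq 2$, and let $M_k$ denote the submodule of $M$ generated by $\{g_i:e_i\in E_k\}$. Clearly $M=M_1+\cdots+M_m$ since $\G$ generates $M$. To see the sum is direct, I would take $x\in M_k\cap\sum_{l\neq k}M_l$, express $x$ in two ways, subtract to obtain an element of the relationship submodule that mixes indices in $E_k$ with indices outside $E_k$, and expand this element as an $R$-linear combination of generators from $\R$. Since by definition no generator of $\R$ has nonzero coefficients across two distinct classes, the expansion separates into pieces supported inside $E_k$ and pieces supported outside $E_k$; each piece is itself a relation in $M$, forcing the $E_k$-part $x$ to vanish. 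The resulting decomposition $M=M_1\oplus\cdots\oplus M_m$ is nontrivial since every $g_i$ is nonzero, contradicting indecomposability of $M$.

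For the ``if'' direction, I would suppose $M=A\oplus B$ is a nontrivial decomposition and exhibit a choice of $\G$ and $\R$ producing multiple equivalence classes. Choosing nonzero generating sets $\G_A$ of $A$ and $\G_B$ of $B$, the disjoint union $\G=\G_A\sqcup\G_B$ generates $M$. By the direct sum property, any relation $\sum r_ig_i=0$ in $M$ splits into a relation on the $\G_A$-indices lying in $A$ and a relation on the $\G_B$-indices lying in $B$. Hence the union $\R=\R_A\cup\R_B$ of a relationship set for $A$ (involving only basis elements indexed by $\G_A$) and one for $B$ generates the full relationship submodule of $M$. No element of $\R$ mixes $\G_A$-indices with $\G_B$-indices, so the $\G_A$-indices and $\G_B$-indices lie in distinct equivalence classes, yielding at least two classes.

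The main obstacle is the direct-sum verification in the forward direction: one must justify carefully that a relation supported across multiple equivalence classes decomposes into relations each supported inside a single class. This hinges on the fact that every generator in $\R$ has support confined to one class by definition of the equivalence, so $R$-linear combinations preserve this block structure across classes. Once this separation is secured the rest of both directions reduces to routine bookkeeping, and the converse direction amounts simply to observing that a decomposition of $M$ supplies an honest decomposition of generators and relations.
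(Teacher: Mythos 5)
Your argument is correct and follows exactly the route the paper intends: the paper merely asserts, in the sentence preceding the theorem, that the generators attached to a single equivalence class span a direct summand and declares the criterion "immediate," while you supply the missing details (the support of every element of $\R$ lies in one class, so projecting a relation onto the coordinates of a class again yields a relation, giving directness of the sum; and conversely a decomposition $M=A\oplus B$ yields split generators and relations, hence at least two classes). No gaps; this is the same approach, fully written out.
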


Since we could choose the generators of a module from its direct summands, we get a characterization of the decomposition number. 

\begin{theorem} \label{T:DN} 
If $M$ satisfies the Krull-Schmidt condition, then
$$\dn(M)=\max_{\G,\R} \{n_{\G,\R} \}.$$
\end{theorem}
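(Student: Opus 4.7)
The proof splits naturally into the two inequalities $\dn(M)\ge \max_{\G,\R}n_{\G,\R}$ and $\dn(M)\le \max_{\G,\R}n_{\G,\R}$, the second being achieved at a specific choice of $(\G,\R)$ built from an indecomposable decomposition.

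\textbf{Upper bound (any $n_{\G,\R}$ is at most $\dn(M)$).} Given any generating set $\G=\{g_i\}$ of $M$ and any relationship set $\R$, let $C_1,\ldots,C_m$ be the equivalence classes of $\{e_i\}$ with $m=n_{\G,\R}$. By the observation recorded immediately before Theorem~\ref{T:I}, the submodule $M_j$ generated by those $g_i$ whose $e_i$ lie in $C_j$ is a direct summand of $M$. Because every relation among the $g_i$ is a sum of elements of $\R$, each of which involves only generators from a single class, the $M_j$ are jointly independent and $M=M_1\oplus\cdots\oplus M_m$. Each $M_j$ is nonzero (the $g_i$ are nonzero generators) and hence has at least one indecomposable summand; refining each $M_j$ into indecomposables yields an indecomposable decomposition of $M$ with at least $m$ summands, which by the Krull--Schmidt condition equals $\dn(M)$. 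Thus $\dn(M)\ge n_{\G,\R}$.

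\textbf{Achievement of the maximum.} Fix an indecomposable decomposition $M=I_1\oplus\cdots\oplus I_n$ with $n=\dn(M)$. For each $k$ select nonzero generators $\G_k=\{g_{k,j}\}$ of $I_k$ together with a relationship set $\R_k$ of $I_k$ with respect to $\G_k$. Let $\G=\bigcup_k\G_k$, let $F=\bigoplus Re_{k,j}$ be the associated free module, and let $\R\subset F$ be the union of the $\R_k$, each regarded as an element of $F$ supported on the basis vectors $e_{k,j}$ with fixed $k$. The direct sum structure guarantees that any relation $\sum r_{k,j}g_{k,j}=0$ in $M$ decomposes, by projecting onto each $I_k$, into separate relations within each $I_k$; hence $\R$ indeed generates the full relationship submodule of $F$. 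No element of $\R$ links basis vectors from different $I_k$, while Theorem~\ref{T:I} applied to the indecomposable $I_k$ together with $(\G_k,\R_k)$ forces the $e_{k,j}$ with fixed $k$ to form a single equivalence class. Therefore there are exactly $n$ classes, i.e., $n_{\G,\R}=n=\dn(M)$, completing the proof.

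\textbf{Main obstacle.} The delicate step is in the upper-bound argument: refining the decomposition $M=M_1\oplus\cdots\oplus M_m$ coming from the equivalence classes into a genuine indecomposable decomposition. This requires that each direct summand $M_j$ itself admits an indecomposable decomposition, which is a companion to, rather than a strict consequence of, the Krull--Schmidt condition as stated. I would address this either by strengthening the working hypothesis to include the existence of indecomposable decompositions for all direct summands of $M$, or by iteratively picking off indecomposable summands of $M_j$ and appealing to Krull--Schmidt uniqueness on $M$ to guarantee that the process terminates.
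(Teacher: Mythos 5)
Your proof is correct and follows essentially the same route the paper intends: the paper's entire justification is the one-line remark that generators may be chosen from the direct summands (your "achievement" step), combined with the observation before Theorem~\ref{T:I} that each equivalence class generates a direct summand (your upper bound). The subtlety you flag about refining the $M_j$ into indecomposables is a fair point about the paper's loose formulation of the Krull--Schmidt condition, but it does not change the argument's substance.
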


More generally, we may define $\dn(M)$ as $\sup \{n_{\G,\R} \}$ for all $R$-modules, see Conjecture \ref{C:FD}. The relative decomposition number $\dn(M,I)$ can be studied similarly.

\section{Linear Algebra}

Let $R$ be a noetherian ring with identity such that finitely generated modules have unique minimal resolutions up to isomorphism, for example a noetherian local ring.

If $M$ is a finitely generated $R$-module, let $\G$ and $\R$ be minimal bases of the module $M$ and the relationship submodule in the corresponding free basis $\{e_i\},$ then $v=|\G|$ and $u=|\R|$ are independent of the minimal presentation of $M.$ We use a relationship matrix $A_{\G,\R}=(a_{ij})_{u\times v}$ to represent $\R,$ where each row $(a_{i1},\cdots, a_{iv})$ of $A_{\G,\R}$ corresponds to an element $\sum a_{ij}e_j$ in $\R.$

Suppose $\Ss$ is another minimal relationship set represented by a matrix $A_{\G,\Ss}.$ Since $(\R)=(\Ss),$ the rows of $A_{\G,\R}$ generate the rows of $A_{\G,\Ss}$ and vice versa. Therefore, there is an invertible matrix $P$ such that $A_{\G,\Ss}=P\cdot A_{\G,\R}.$ 

Suppose $\Hh$ is another minimal basis with corresponding free basis $\{f_i\}.$ Then there is an invertible transformation matrix $Q$ between the free bases such that $(e_i)=Q\cdot (f_i),$ and $A_{\G,\R}\cdot Q$ represents the relationship set in $\{f_i\}$ induced from $\R.$ 

Therefore, a relationship set $\R$ in $\{e_i\}$ is represented by a relationship matrix $A_{\G,\R}.$ And the relationship matrices of different choices of minimal bases of the module and the relationship submodules are $P\cdot A_{\G,\R}\cdot Q$ for invertible square matrices $P$ and $Q,$ which are equivalent to $A,$ or $P\cdot A_{\G,\R}\cdot Q\sim A$. 

In general, let $A$ be a $u\times v$-matrix over $R.$ We say $A$ has $t$ disjoint columns if for each $k$ such that $1\leq k\leq t,$ there are $n_k(>0)$ columns in $A$ such that their nonzero rows have entries zero in all other $v-n_k$ columns. We call the disjoint columns the blocks of $A,$ and call the maximal number of blocks the block number $\bn(A).$ If $A$ is equivalent to a matrix in $R$ with at least two blocks, then $A$ is blockable. Otherwise, $A$ is inblockable.

The blocks of $A_{\G,\R}$ correspond to the direct summands of $M,$ in particular the columns with entries 0 correspond to the free direct summands of $M.$
Therefore we have the following equivalent criterion of indecomposability as Theorem \ref{T:I}:

\begin{theorem}\label{T:C}
The module $M$ is indecomposable if and only if $A_{\G,\R}$ is inblockable for some minimal basis $\G$ and some relationship set $\R.$
\end{theorem}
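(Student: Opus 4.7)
The plan is to deduce this theorem from Theorem \ref{T:I} by translating the equivalence classes of basis elements into the block structure of the relationship matrix. The key combinatorial observation is that for any minimal basis $\G$ and minimal relationship set $\R$, the equivalence $e_i\sim e_j$ induced by $\R$ is exactly the transitive closure of the relation ``some row of $A_{\G,\R}$ has nonzero entries in both columns $i$ and $j$.'' The resulting partition of the columns is the finest one in which every row is supported within a single class, which is precisely the maximal block decomposition of $A_{\G,\R}$; hence $\bn(A_{\G,\R}) = n_{\G,\R}$.

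Next I would invoke the discussion preceding the theorem: any two minimal presentations of $M$ yield relationship matrices related by $A_{\G',\R'}=P\cdot A_{\G,\R}\cdot Q$ for invertible $P$ and $Q$, and conversely any such $P\cdot A_{\G,\R}\cdot Q$ is itself the relationship matrix of a minimal presentation. Consequently blockability is an invariant of the equivalence class of $A_{\G,\R}$, and therefore of $M$ itself.

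For the forward direction, suppose $M$ is indecomposable and fix any minimal basis $\G$ and minimal relationship set $\R$. Any matrix $P\cdot A_{\G,\R}\cdot Q$ equals $A_{\G'',\R''}$ for some minimal presentation, and Theorem \ref{T:I} gives $n_{\G'',\R''}=1$, so by the translation above $\bn(P\cdot A_{\G,\R}\cdot Q)=1$. Thus $A_{\G,\R}$ is not equivalent to any matrix with two or more blocks, i.e., it is inblockable.

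For the reverse direction I would argue the contrapositive. If $M=M_1\oplus M_2$ nontrivially, I would pick minimal bases $\G_k$ and minimal relationship sets $\R_k$ for each $M_k$ and lift them to $\G=\G_1\cup \G_2$ and $\R=\R_1\cup \R_2$ for $M$; then $A_{\G,\R}$ is block diagonal with two blocks, and any other minimal relationship matrix of $M$ is equivalent to it and hence blockable, so no minimal choice yields an inblockable matrix. The main obstacle is precisely this lifting step: one must use the unique-minimal-resolution hypothesis to conclude that concatenating minimal presentations of $M_1$ and $M_2$ gives a minimal presentation of $M$, which amounts to a Nakayama-style additivity of the ranks $v=|\G|$ and $u=|\R|$ under direct sum.
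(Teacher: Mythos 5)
Your reduction to Theorem \ref{T:I} is sound and in fact supplies more of the argument than the paper does: the identification $\bn(A_{\G,\R})=n_{\G,\R}$ (the maximal block decomposition is exactly the partition of the columns into equivalence classes under the row-support relation), the observation that every $P\cdot A_{\G,\R}\cdot Q$ with $P,Q$ invertible is again the relationship matrix of a minimal presentation, and the block-diagonal presentation of a nontrivial direct sum are all correct, and together they prove both implications as long as $\R$ is a \emph{minimal} relationship set. The paper, by contrast, takes the block--summand correspondence as already established in the paragraph preceding the theorem and spends its entire proof on a single point that your argument does not reach.

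That point is the quantifier in the statement: $\R$ is allowed to be an arbitrary relationship set. Your contrapositive concludes only that ``no minimal choice yields an inblockable matrix,'' but a non-minimal $\R$ with $|\R|=u'>u$ gives a $u'\times v$ matrix that cannot be written as $P\cdot A_{\Hh,\Ss}\cdot Q$ with $P$ an invertible square matrix, so its blockability does not follow from your equivalence-invariance step. The issue is not vacuous: adding a redundant row can genuinely merge blocks (appending the row $(x,\ y)$ to the matrix with rows $(x,\ 0)$ and $(0,\ y)$ connects the two columns, so the resulting matrix as written has $n_{\G,\R}=1$), and one must show that the redundant rows can always be stripped off by invertible row operations so that blockability survives. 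This is precisely what the paper's proof is about: it writes $A_{\Hh,\Ss}=P\cdot A_{\G,\R}\cdot Q$ with $P$ a $u'\times u$ (non-square) transformation and uses minimality of $\Ss$ to produce a one-sided inverse $P'$ with $P'\cdot P=I_u$. To close your argument you need this reduction from arbitrary $\R$ to minimal $\R$ (or else restate the theorem with $\R$ minimal). Your final remark that the lifting step in the reverse direction rests on additivity of $v$ and $u$ over direct sums is correct, and it is worth saying explicitly that this is where the standing hypothesis on unique minimal resolutions is used.
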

\begin{proof}
Let $\Hh=\{h_i\}_{i=1}^{v}$ and $\Ss$ ($|\Ss|=u$) be minimal bases of the module $M$ and the relationship submodule.
If $\G=\{g_i\}$ is a minimal basis of $M$ and let $\R$ ($|\R|=u'\geq u$) be a relationship set. Then $A_{\Hh,\Ss}=P\cdot A_{\G,\R}\cdot Q$ for a $u'\times u$ transformation matrix $P$ from $\R$ to $\Ss$ and an invertible $v\times v$ transformation matrix $Q$ on the corresponding free bases of $\Hh$ and $\G.$ 
Since $\Ss$ is a minimal basis, there is a $u\times u'$ matrix $P'$ such that $P'\cdot P$ is the $u\times u$ identity matrix. Therefore, the matrix $P'\cdot P\cdot A_{\Hh,\Ss}\cdot Q$ is equivalent to $A_{\Hh,\Ss}.$ 
Hence $A_{\G,\R} =P\cdot A_{\Hh,\Ss}\cdot Q$ is inblockable if and only if $A_{\Hh,\Ss}$ is inblockable.
\end{proof}

Similarly, we have a description of the decomposition number as Theorem \ref{T:DN}:

\begin{theorem} 
If $\G$ is a minimal basis of $M$ and $\R$ is a relationship set, then 
$$\dn(M)=\max_{A\sim A_{\G,\R}} \{\bn(A) \}.$$
\end{theorem}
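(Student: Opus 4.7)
The plan is to prove both inequalities $\max_{A\sim A_{\G,\R}}\{\bn(A)\}\le\dn(M)$ and $\max_{A\sim A_{\G,\R}}\{\bn(A)\}\ge\dn(M)$, exploiting the fact, developed earlier in this section, that the equivalence class of $A_{\G,\R}$ under $P\cdot-\cdot Q$ with $P,Q$ invertible is precisely the collection of relationship matrices arising from all choices of minimal bases of $M$ and of the relationship submodule. This lets me pass freely between matrix-level statements and module-level statements about direct decompositions.

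For the upper bound, I would take any $A\sim A_{\G,\R}$ and write $A=P\cdot A_{\G,\R}\cdot Q$. The transformation $Q$ on the free basis produces a new minimal basis $\G'$ of $M$, and $P$ produces a new minimal relationship set $\R'$, so $A=A_{\G',\R'}$. If $\bn(A)=t$, the block structure partitions the columns of $A$ (hence the elements of $\G'$) into $t$ groups so that every row of $A$ has support in a single group. By the observation preceding Theorem \ref{T:C}, the submodules generated by these groups of generators assemble into a direct sum decomposition $M=M_1\oplus\cdots\oplus M_t$. Refining each $M_i$ into indecomposables yields at least $t$ indecomposable summands, and by the Krull-Schmidt hypothesis this refinement has exactly $\dn(M)$ terms, so $t\le\dn(M)$.

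For the lower bound, I would begin with the Krull-Schmidt indecomposable decomposition $M=I_1\oplus\cdots\oplus I_n$ with $n=\dn(M)$, and choose for each $I_k$ a minimal basis $\G_k$ together with a minimal relationship set $\R_k$. The union $\G^\ast=\bigcup_k\G_k$ generates $M$, and under the hypothesis on $R$ that minimal resolutions are unique up to isomorphism, the size of any minimal basis of $M$ equals $\sum_k|\G_k|$, so $\G^\ast$ is itself minimal; analogously $\R^\ast=\bigcup_k\R_k$ is a minimal relationship set. By construction, $A_{\G^\ast,\R^\ast}$ is block-diagonal with $n$ blocks, one for each $I_k$, so $\bn(A_{\G^\ast,\R^\ast})\ge n$. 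Since $A_{\G^\ast,\R^\ast}$ and $A_{\G,\R}$ both arise from minimal bases of $M$ and its relationship submodule, they are equivalent, and the desired lower bound $\max_{A\sim A_{\G,\R}}\{\bn(A)\}\ge n=\dn(M)$ follows.

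The main obstacle I anticipate is in the lower-bound argument: one must justify that assembling minimal presentations of the indecomposable summands $I_k$ into a presentation of $M=\bigoplus I_k$ yields a presentation of $M$ that is again minimal in both the basis and the relations. This is exactly where the standing hypothesis on $R$ (uniqueness of minimal resolutions up to isomorphism) must be invoked. Once this step is in place, the block-diagonal structure of $A_{\G^\ast,\R^\ast}$ is transparent, and the equivalence $A_{\G^\ast,\R^\ast}\sim A_{\G,\R}$ follows from the change-of-basis formalism recorded earlier in the section.
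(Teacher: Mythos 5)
Your two-inequality argument is correct and coincides with the approach the paper only sketches: the paper offers no written proof here, justifying the theorem by analogy with Theorem \ref{T:DN} (``we could choose the generators of a module from its direct summands,'' which is your lower bound) together with the stated correspondence between blocks of $A_{\G,\R}$ and direct summands of $M$ (your upper bound). Your write-up merely makes explicit the additivity of minimal presentations over the summands $I_k$, which the paper's standing hypothesis on $R$ is indeed meant to supply.
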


\section{Isomorphism}

Let $R$ be a noetherian ring with identity such that finitely generated modules have unique minimal resolutions up to isomorphism.

Define the category $\C$ of equivalence classes of finite dimensional matrices in $R$ as follows. The objects are finite dimensional matrices with the equivalence $\sim$ such that
\begin{itemize}
\item $P\cdot A\cdot Q \sim A$ for square invertible matrices $P$ and $Q,$
\item $(A, 0)^T\sim A^T,$ 
\item $(1)\sim 0$ where 0 is the empty matrix. 
\end{itemize}
If $[A_{u\times v}]$ and $[B_{s\times t}]$ are in $\C,$ then a morphism from $[A]$ to $[B]$ is an ordered pair of matrices $\{S_{u\times s},T_{v\times t}\}$ such that $A\cdot T=S\cdot B,$ under the equivalence compatible with the one on the objects.
The direct sum of $[A]$ and $[B]$ is $\left[ \begin{pmatrix} A & 0\\ 0 & B \end{pmatrix}\right].$ The category $\C$ is an abelian category.

If $[A]$ is in $\C$ such that $A=(a_{ij})_{u\times v}$ has no block $(1),$ then there is a finitely generated $R$-module $M_A=\oplus_i Re_i/(\R)$ where $\R=\{\sum_j a_{ij}e_j\}.$ The module $M_A$ has a relationship matrix $A.$ If $[\{S,T\}]$ is a morphism from $[A]$ to $[B]$ in $\C,$ then the matrix $T$ induces a transformation on the corresponding free bases of $M_A$ and $M_B,$ hence an $R$-morphism from $M_A$ to $M_B.$  

Let $\D$ denote the category of isomorphism classes of finitely generated $R$-modules.
If $[M]$ is in $\D,$ then there is a finite dimensional matrix $A_M$ which is a relationship matrix of $M.$ If $[N]$ is in $\D$ with minimal bases $\Hh$ and $\Ss,$ then an $R$-morphism from $M$ to $N$ is determined by the transformation matrix $T$ on the corresponding free bases of $\G$ and $\Hh,$ which also induces a transformation $S$ from $\R$ to $\Ss.$ The pair $\{S,T\}$ is a morphism from $A_M$ to $A_N.$

Therefore, we have the correspondence between decomposition theory and linear algebra as follows:

\begin{theorem} \label{T:CI}
The categories $\C$ and $\D$ are isomorphic.
\end{theorem}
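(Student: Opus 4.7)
The plan is to define functors $F : \C \to \D$ and $G : \D \to \C$ and verify that they are mutually inverse, following the constructions already sketched in the two paragraphs preceding the theorem. On objects, $F$ sends a class $[A]$, represented by a matrix with no block $(1)$, to the class of $M_A = \oplus_i Re_i/(\R)$, and $G$ sends $[M]$ to the class of any relationship matrix $A_M$ obtained from a minimal basis of $M$ together with a minimal generating set of its relationship submodule. On morphisms, $F$ sends $[\{S,T\}]$ to the $R$-morphism induced by $T$ on the corresponding free bases, and $G$ sends an $R$-morphism $\varphi$ to the class of the pair $\{S,T\}$ obtained by lifting $\varphi$ to the minimal free presentations.

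The first main step is well-definedness on objects. For $F$, each of the three generating relations on $\C$ must preserve the isomorphism class of $M_A$: the relation $P \cdot A \cdot Q \sim A$ corresponds to changing the minimal bases of the free module and of the relationship submodule; the relation $(A,0)^T \sim A^T$ corresponds to adjoining or deleting a trivial zero-relation; and $(1) \sim 0$ corresponds to contracting a trivial summand killed by $1 \in R$. For $G$, well-definedness rests on the standing hypothesis that finitely generated $R$-modules have unique minimal resolutions up to isomorphism, together with the analysis carried out just before Theorem~\ref{T:C} showing that the matrices arising from any two minimal bases differ by invertible matrices $P$ and $Q$.

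The second main step is well-definedness on morphisms. An $R$-morphism $\varphi : M_A \to M_B$ lifts to a pair $\{S,T\}$ satisfying $A \cdot T = S \cdot B$, and two lifts of the same $\varphi$ differ by data factoring through the relations, which should match exactly the equivalence on morphisms implicit in $\C$. Conversely, any morphism $\{S,T\}$ of $\C$ descends to a well-defined $R$-morphism on cokernels because $A \cdot T = S \cdot B$ says $T$ carries relations into relations. The remaining identities $F \circ G = \mathrm{id}_\D$ and $G \circ F = \mathrm{id}_\C$ then reduce to verifying $M_{A_M} \cong M$ and $A_{M_A} \sim A$, the former by construction and the latter by uniqueness of minimal presentations once redundant zero-rows and blocks $(1)$ are absorbed using the two extra equivalences on $\C$.

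The hard part will be pinning down the equivalence relation on morphisms in $\C$ precisely enough to match $R$-module homomorphism equality, since the definition of $\C$ only states that morphisms are taken ``under the equivalence compatible with the one on the objects.'' Making this compatibility explicit --- in particular, showing that two pairs $\{S_1,T_1\}$ and $\{S_2,T_2\}$ are equivalent in $\C$ if and only if they induce equal maps on cokernels, and that composition in $\C$ matches composition of $R$-morphisms --- will be the crux of the argument; once that is settled, the remainder of the proof is a bookkeeping verification that the two constructions invert each other.
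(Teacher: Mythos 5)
Your plan follows the same route as the paper: in fact the paper gives no separate proof of this theorem at all, presenting it as an immediate consequence of the two constructions $[A]\mapsto[M_A]$ and $[M]\mapsto[A_M]$ described in the preceding paragraphs, so your write-up is already more explicit than the source. The object-level checks you list (invariance of $M_A$ under the three generating relations on $\C$; well-definedness of $A_M$ from uniqueness of minimal resolutions; $M_{A_M}\cong M$ by construction; $A_{M_A}\sim A$ after splitting off and deleting blocks $(1)$, which requires reducing unit entries by row and column operations and hence genuinely uses the local/semiperfect hypothesis) are the right ones and are all doable.

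However, the difficulty you flag in your final paragraph is a genuine gap, and neither you nor the paper closes it. For $G$ to be well defined on morphisms, and for $F$ and $G$ to be mutually inverse, you must know exactly when two pairs $\{S_1,T_1\}$ and $\{S_2,T_2\}$ from $[A_{u\times v}]$ to $[B_{s\times t}]$ are identified in $\C$; the phrase ``under the equivalence compatible with the one on the objects'' does not determine this. The relation is forced by the module side: since the rows of $T$ record the images of the generators of $M_A$, two lifts $T_1,T_2$ induce the same map $M_A\to M_B$ precisely when every row of $T_1-T_2$ lies in the row space of $B$, that is, $T_1-T_2=U\cdot B$ for some $v\times s$ matrix $U$ over $R$ (with $S$ adjusted accordingly). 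You must adopt this as the \emph{definition} of the morphism equivalence in $\C$, check it is stable under composition and under the object-level relations, and only then conclude fullness and faithfulness. As written, your statement that two lifts ``differ by data factoring through the relations, which should match exactly the equivalence on morphisms implicit in $\C$'' is circular: it uses the module category to specify the matrix category's morphisms rather than verifying an independently given definition. Until that equivalence is pinned down, the claimed isomorphism of categories is not proved --- by you or by the paper.
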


\section{Example}

Theorem \ref{T:C} provides a construction of indecomposable modules, as demonstrated below.

\begin{example}\label{E:A}
If $R$ is a commutative noetherian local ring such that $\dim_R \textnormal{soc}R>1,$ then $R$ has infinitely many torsion-free indecomposable modules.
\end{example}
\begin{proof}
Let $n$ be a natural number, let $x$ and $y$ be two different socle elements in $R,$ and let $\Z$ be the module
$\Z=\bigoplus_{i=1}^{n+1}Re_i/(xe_i+ye_{i+1}\:|\: 1\leq i\leq n),$
where $\{e_i\}$ is a free basis. Then $\Z$ has a relationship matrix 
$$A=\begin{pmatrix}
x & y & 0 & 0 & \cdots \\
0 & x & y & 0 & \cdots \\
\cdots & \cdots & \cdots & \cdots & \cdots \\
\cdots & \cdots & 0 & x & y
\end{pmatrix}_{n\times (n+1)}.$$

Suppose $\Z$ is decomposable, then $A$ is blockable. So there are invertible $n\times n$ matrix $P$ and $(n+1)\times (n+1)$ matrix $Q$ such that $$P\cdot A\cdot Q=
\begin{pmatrix}
B & 0 \\
0 & C
\end{pmatrix},
$$ where $B$ and $C$ are blocks.
Since $\m\cdot x =\m\cdot y=0,$ we could regard the matrices $P$ and $Q$ in $k=R/\m.$ Without loss of generality, assume that $B$ is a $s\times t$ matrix of such that $s<t.$ Since $x$ and $y$ are linearly independent over $k,$ we may replace $x$ and $y$ by variables $X$ and $Y.$ Then over the field $k(X,Y),$ there is a nonzero vector $v$ such
that $B\cdot v=0.$ Hence
$$
A\cdot Q\cdot \begin{pmatrix}v\\0\end{pmatrix}=P^{-1}\begin{pmatrix}
B & 0 \\
0 & C
\end{pmatrix}
\begin{pmatrix} v\\0\end{pmatrix}=0,
$$
and $Q\cdot \begin{pmatrix} v\\0\end{pmatrix}$ is in the solution space of $A\cdot V=0$ over $k(X,Y),$ which is $k(X,Y)\cdot \left(Y^n, -XY^{n-1}, \cdots, (-X)^n\right)^T.$ However, $Q^{-1}\cdot \left(Y^n, -XY^{n-1}, \cdots, (-X)^n\right)^T$ does not have entries 0, which is a contradiction. 
\end{proof}

\section{Conjecture}

The author would like to propose the following conjecture regarding the functorial behavior of the decomposition number.

\begin{conjecture} \label{C:FD}
Let $f:R\text{-mod}\to R\text{-mod}$ be an additive functor, and let $M$ be an $R$-module such that $\dn(f^n(M))<\infty$ for all $n,$  
then (hopefully without additional conditions)
\begin{enumerate}
 \item[(a)] 
 $\lim_n \log_2 \dn (f^n(M))/n$ exists, 
 \item[(b)] 
 $\sum_n \dn (f^n(M))\cdot t^n$ is a rational function.
\end{enumerate}
The same conclusion holds for the relative decomposition numbers.
\end{conjecture}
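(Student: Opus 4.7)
The plan is to linearize the iteration $M, f(M), f^2(M),\ldots$ by a nonnegative transfer matrix, so that both parts reduce to the spectral theory of a single linear operator acting on the multiplicities of indecomposable summands.

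First I would use the Krull--Schmidt condition on each $f^n(M)$ to make $\dn$ additive on direct sums, and then use the additivity of the functor $f$ to propagate indecomposable decompositions. Writing $f^n(M) \cong \bigoplus_{I} I^{c_n(I)}$, where the sum runs over isomorphism classes of indecomposable $R$-modules, and setting $t_{IJ} := \dn(f(J), I)$, additivity gives the recursion
\[
c_{n+1}(I) \;=\; \sum_{J} t_{IJ}\, c_n(J), \qquad \dn(f^n(M)) \;=\; \sum_I c_n(I).
\]
Let $\I$ denote the set of isomorphism classes of indecomposables appearing as a summand of some $f^n(M)$. On $\I$ the recursion becomes $c_{n+1}=T c_n$ with $T=(t_{IJ})_{I,J\in\I}$ an a priori infinite nonnegative matrix, so $\dn(f^n(M)) = \mathbf{1}^{\top} T^n c_0$.

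If one can show $|\I|<\infty$, then $T$ is a finite nonnegative square matrix and both parts follow from standard tools. For (b), the sequence $(\dn(f^n(M)))_n$ satisfies a linear recurrence with constant coefficients, so $\sum_n \dn(f^n(M))\, t^n = \mathbf{1}^{\top}(\mathrm{Id}-tT)^{-1} c_0$ is a rational function of $t$. For (a), Jordan decomposition (or Perron--Frobenius, since $T$ is nonnegative) gives $\dn(f^n(M)) \sim C\, n^d \rho^n$ for some $\rho=\rho(T)\ge 0$, $d\ge 0$, $C>0$, provided the sequence is not eventually zero, so $\log_2 \dn(f^n(M))/n \to \log_2 \rho$. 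Replacing $\mathbf{1}^{\top}$ by the coordinate projection onto a chosen $I\in\I$ gives the same conclusions for the relative $\dn(f^n(M),I)$.

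The main obstacle, and presumably the reason this is stated as a conjecture rather than a theorem, is proving $|\I|<\infty$. The hypothesis $\dn(f^n(M))<\infty$ only bounds the multiplicity count at each fixed $n$; it does not prevent infinitely many distinct isomorphism types of indecomposables from appearing across the orbit. Without such finiteness one cannot even establish the submultiplicativity $\dn(f^{n+m}(M))\le \dn(f^n(M))\,\dn(f^m(M))$ needed to apply Fekete's lemma as a shortcut to (a), because an indecomposable $J$ occurring in $f^n(M)$ need not be a summand of $M$, so there is no reason $\dn(f^m(J))\le \dn(f^m(M))$. Reasonable sufficient hypotheses to add would be that $f$ preserves a fixed additive subcategory $\mathrm{add}(N)$ containing $M$, or that the indecomposables in the orbit have uniformly bounded Betti numbers, ranks, or some analogous Noetherian invariant; I would focus the work on isolating the minimal such hypothesis under which $\I$ can be shown finite, after which (a) and (b) become essentially automatic.
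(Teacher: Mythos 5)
The statement you are addressing is a conjecture; the paper offers no proof of it, so there is no argument of the author's to compare yours against. Judged on its own terms, your transfer-matrix linearization is the natural first attack, and the recursion $c_{n+1}(I)=\sum_J t_{IJ}\,c_n(J)$ is correct \emph{granted} that every $f^n(M)$ and every $f(J)$ satisfies the Krull--Schmidt condition. But you have correctly diagnosed that the argument does not close, and the gap you name is in fact the entire content of the conjecture. The finiteness of $\I$ fails precisely in the motivating example the paper cites: for $f=F_*$ the Frobenius pushforward on a ring without finite $F$-representation type, infinitely many isomorphism classes of indecomposables occur across the orbit $\{F^n_*R\}$, the matrix $T$ is genuinely infinite, and yet the limit in (a) still exists by Tucker's theorem --- by methods (semicontinuity and convexity estimates on ranks, not spectral theory of a finite matrix) that your framework cannot reach. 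So adding ``$|\I|<\infty$'' as a hypothesis does not isolate a minimal condition; it restricts to the case (finite representation type / FFRT) where rationality of the generating function is already the standard argument, and it excludes the cases the conjecture is actually about.

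A second, smaller gap: the conjecture is stated immediately after the paper extends $\dn$ to all modules as $\sup_{\G,\R}\{n_{\G,\R}\}$, so the hypothesis $\dn(f^n(M))<\infty$ does not grant you Krull--Schmidt, and without it the multiplicities $c_n(I)$ and the entries $t_{IJ}=\dn(f(J),I)$ need not be well defined, nor is $\dn$ known to be additive on direct sums under the $\sup$ definition. Your plan therefore silently adds two hypotheses (Krull--Schmidt for the whole orbit, and finiteness of $\I$) to a statement whose explicit aspiration is ``hopefully without additional conditions.'' What your write-up does well is flag the failure of submultiplicativity, which blocks the obvious Fekete shortcut to (a); but the honest conclusion is that your proposal proves the conjecture only in a special case, not that the remaining work is a routine finiteness check.
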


The $F$-signature \cite{HL02,kT12} is a special case of (a).

\begin{acknowledgement} 
The author would like to thank Craig Huneke and Gennady Lyubeznik for their continued support, as well as Yongqiang Chen and Uli Walther for several discussions.
\end{acknowledgement}

\end{document}